\newcommand {\IZ}{\mathbb{Z}}
\newcommand {\IN}{\mathbb{N}}  
\newcommand {\IR}{\mathbb{R}}   
\newtheorem{stat}{Statement}
\newtheorem{prop}[stat]{Proposition}
\newtheorem{cor}[stat]{Corollary}
\newtheorem{thm}[stat]{Theorem}
\newtheorem{lemma}[stat]{Lemma}
\newtheorem{remark}[stat]{Remark}
\def\L{{\mathcal L}}
\def\cc{{\mathcal C}}
\def\={&=&}
\def\eps{\varepsilon}
\def\a{\alpha}
\def\b{\beta}
\def\g{\gamma}
\title{ Contact process under renewals I}
\author{Luiz Renato G. Fontes\footnote{Instituto de Matem\'atica e
Estat\'\i stica. Universidade de S\~ao Paulo, SP, Brazil. E-mail:
lrfontes@usp.br},
Domingos H. U. Marchetti\footnote{Instituto de F\'\i sica. Universidade de
S\~ao Paulo, SP, Brazil. Email: marchett@if.usp.br},\\
Thomas S. Mountford \footnote{\'Ecole Polytechnique F\'ed\'erale de Lausanne,
D\'epartement de Math\'ematiques,
1015 Lausanne, Switzerland.
Email: thomas.mountford@epfl.ch}, and
Maria Eulalia Vares\footnote{Instituto de Matem\'atica. Universidade Federal
do Rio de Janeiro, RJ, Brazil. \!Email: eulalia@im.ufrj.br}}
\begin{document}

\maketitle

\begin{abstract}

We investigate a non-Markovian analogue of the Harris contact process in $\mathbb{Z}^d$: an individual is attached to each site $x \in \mathbb{Z}^d$, and it can be infected or healthy; the infection propagates to healthy neighbors just as in the usual contact process, according to independent exponential times with a fixed rate $\lambda$; nevertheless, the possible recovery times for an individual are given by the points of a renewal process with heavy tail; the renewal processes are assumed to be independent for different sites.
We show that the resulting processes have a critical value equal to zero.

\bigskip

\noindent \textsc{MSC 2010:} 60K35, 60K05, 82B43. 

\medskip

\noindent \textsc{Keywords:} Contact process, percolation, renewal process.
\end{abstract}


\setcounter{equation}{0}
\section{Introduction}
\label{sec;1}

The classical contact process is a model for (among other phenomena) the spread of an infectious disease.   It was introduced by Harris \cite{H} and has been intensively studied since (see
\cite{L}, \cite{Du1} for overviews).
In this model sites $x \  \in \IZ^{d}$ can be thought of as individuals, $\xi  \ \in \{0,1  \} ^ {\IZ^{d}}$ is a configuration giving the state of health for the population:  $\xi (x) $ represents the state of health of individual $x$,
with  $\xi (x) = 1$ signifying that $x$ is ill and $\xi(x) = 0 $ that individual $x$ is healthy.   In the basic model, sick individuals become healthy at rate $1$ irrespective of the state of health elsewhere while healthy individuals become sick at a rate equal to a parameter $\lambda $ times the number of infected neighbors. The model has many variants.  The rate of infection may not depend only on nearest neighbours or an individual may infect others at a rate depending on the distance between them and so forth.  Equally, the lattice $\IZ^d$ can be replaced by other graphs such as trees or more recently random finite graphs (see $e.g. \ $ \cite{Du2}).  In fact our principal result, Theorem \ref{thm1} holds for the process defined on any infinite connected graph since
such a graph contains a ``copy" of $ \mathbb{N} $. It is usually constructed via a ``Harris system", a collection of Poisson processes.  For the original model one puts Poisson processes of rate $\lambda $ on
ordered pairs $x, y $ of neighbours, which give the times when the individual $x$  ``tries" to infect individual $y$ and one puts a Poisson process of rate $1$ on each site $x$, so that if the individual at $x$ is infected immediately before this time, it gets cured. Motivated by questions regarding long range percolation, we investigate a variant of the contact process in $\mathbb{Z}^d$ in which the death times for the
infection at the sites are determined by independent renewal processes. For brevity we call it renewal contact process (RCP).  Obviously, in no longer using Poisson processes we lose Markovianess. We investigate the possibility of the infection surviving forever when the process starts with a single infected individual, no matter how small $\lambda$ is.

We now become more specific.
\noindent Given a probability measure $\mu$ on $[0, \infty)$ with $\mu\{0\} <1$ and a strictly positive parameter $\lambda$, the RCP admits the following ``Harris graphical construction":

\vspace{0.3cm}

\noindent (I) Let $\{T_{x, i}\}_{x \in \hspace{0.1cm} \IZ^{d},\hspace{0.1cm} i \in \IN}$ be i.i.d. random variables with law $\mu$;

\vspace{0.3cm}

\noindent (II) Let $N=\{N_{x, y}\}_{x \sim y, x,y \in \hspace{0.1cm} \IZ^{d}}$ be a system of i.i.d. rate $\lambda$ Poisson processes, assumed to be independent of $\sigma (\{ T_{x, i}\}_{x \in \IZ^{d}, i \in N})$.
({\bf{Notation:}} $x \sim y$ signifies $\|x-y\|_1=1$, where $\|\cdot\|_1$ stands for the usual $\ell_1$-norm in $\mathbb{R}^d$.)

\vspace{0.3cm}

\noindent For each $x \in \IZ^{d}$ and $n \ge 1$, we write $S_{x,n}=T_{x,1}+\dots + T_{x,n}$, and let $D_x=\{(x,S_{x,n}), n\ge 1\}$, and
use $\mathcal{R}$ to denote any renewal process with interarrival distribution $\mu$.
The segments $\{x\}\times (S_{x,n}, S_{x,n+1})$ will be called \emph{renewal intervals} or \emph{gaps}.

\noindent Given these processes, the RCP is constructed according to the usual recipe:
if $s< t$ and $x,y \in \mathbb{Z}^d$,  a path $\gamma$ from $(x,s)$ to $(y,t)$ is a c\`adl\`ag function on $[s,t]$  for which there exist times  $t_0 = s < t_1 < \cdots < t_k = t$ and  sites $x_0 = x, x_1, \ldots, x_{k-1} = y$ in $\mathbb{Z}^d$ such that $\gamma(u)=x_i$ for $u \in [t_i,t_{i+1})$, and
\begin{itemize}
\item[$\bullet$] $D_{x_i}\cap \{x_i \} \times [t_i,\; t_{i+1}] = \emptyset$ for $i = 0, \ldots, k - 1$;
\item[$\bullet$] $\|x_i-x_{i+1}\|=1$ for $i = 0, \ldots, k-2$;
\item[$\bullet$] $t_i \in N_{x_{i-1}, x_i}$ for $i = 1, \ldots, k-1$.
\end{itemize}
For $A \subset\IZ^{d}$ the RCP $(\xi^{A}_{t})_{t \geq 0}$ starting from initial configuration $\mathbbm{1} _A$ is defined as
$$\xi^{A}_{t}=\{y \colon \textrm{ there exists a path from } (x,0) \textrm{ to } (y,t) \text { for some } x \in A\}.$$

We can identify a configuration $\xi \in \{0,1  \} ^ {\IZ^{d}} $ with the subset (of ${\IZ^{d}} $) $ \{ x: \ \xi(x) = 1\}$.  With this identification we have that (with all processes generated by the same Harris system)
$$
 \xi^A_t \ = \ \cup _ {x \in A } \xi^ { \{x\} }_t.
$$
This property is referred to as {\it additivity}.
We can and will regard the Harris system of Poisson processes and renewal processes as generating simultaneously the processes $ \xi^A_. $  We let $P^{\lambda, \mu} $ to be the probability distribution under which (I) and (II) above hold.  $P^{\lambda, \mu} $ then affixes probabilities to events generated by the processes $\xi^A_. $ generated by these Poisson processes and renewal processes.

\noindent Given a process $(\xi^{A}_{t})_{t \geq 0}$, we write
\begin{equation}
\label{1.1}
\tau^A = \inf \{t: \xi^{A}_{t} = \emptyset \}.
\end{equation}

\noindent Given $\mu$ let $\lambda_c = \inf \{ \lambda : P^{\lambda, \mu} ( \tau^{\{0\}} = \infty) > 0\}$. By translation invariance we have immediately that
$\lambda_c = \inf \{ \lambda : P^{\lambda, \mu} ( \tau^{\{x\}} = \infty) > 0\}$.   From additivity one sees that
for $e.g. \ , \lambda < \lambda_c $ and any finite subset of the integer lattice $A$, that
$P^{\lambda, \mu} ( \tau^{\{A\}} = \infty ) \ = \ P^{\lambda, \mu} ( \max _ {x \in A }\tau^{\{x\}} = \infty)  \ \leq \ \sum_{x \in A } P^{\lambda, \mu} ( \tau^{\{x\}} = \infty) \ = \ 0$.  We thus see that $\{0\}$  may be replaced by any finite set $A$.  By the Hewitt-Savage 0-1 law it follows that whenever $\mu\{0\}=0$, for any $\lambda > \lambda_c$, the renewal ``environment" is a.s. such as to give a strictly positive chance of the $\lambda $ Poisson processes yielding a contact process that never hits configuration $\vec 0$.
\vspace{0.3cm}

\noindent
Our main result is
\begin{thm} \label{thm1}
We make the following assumptions on the interarrival distribution $\mu$:

\noindent A) There exist $1 < M_{1} < \infty$,  $\epsilon_{1} > 0$ and $t_{0} \ \in \ (0, \infty ) $ such that
$$\forall \hspace{0.3cm} t > t_{0}, \hspace{0.2cm} \epsilon_{1} \int_{[0,t]}\,s\, \mu (ds)  < t\mu(t, M_{1} t).$$

\vspace{0.2cm}

\noindent B) There exist $1 < M_{2} < \infty$, $\epsilon_{2} > 0$ and $r_{2} < \infty$ so that $$\forall \hspace{0.3cm} r \geq r_{2}, \hspace{0.3cm}\epsilon_{2} \mu [M_{2}^{r} , M_{2}^{r + 1}] \leq \mu [M_{2}^{r+1}, M_{2}^{r + 2}].$$

\vspace{0.2cm}

\noindent C) There exist $M_3 <\infty$, $\epsilon_{3} > 0$ so that for $t \geq M_{3}$

$$
 t^{- (1 - \epsilon_{3})}\ \leq \ \mu(t,+\infty) \ \leq \ t^{-  \epsilon_{3}} .
$$
\noindent Under these conditions, the critical value for the RCP associated with $\mu$,  $\lambda^ \mu _c$, vanishes.

\end{thm}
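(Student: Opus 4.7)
The plan is to reduce to a one-dimensional problem and then run a multi-scale renormalization argument driven by the polynomial lower bound on the tail of $\mu$. Since any infinite connected graph contains an embedded copy of $\mathbb{N}$, it suffices by monotonicity in the underlying graph to prove $\lambda_c^\mu=0$ for the RCP on $\mathbb{Z}$. Fix an arbitrary $\lambda>0$; we must show that, starting from a single infected site, the infection survives forever with positive $P^{\lambda,\mu}$-probability.

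The driving observation is that condition C guarantees that in any interval of $n$ sites the maximum first renewal time $\max_x T_{x,1}$ is at least of order $n^{1/(1-\epsilon_3)}$ with probability bounded away from zero. A site $x$ whose initial gap $T_{x,1}$ is of order $T$ will, once infected well before $T_{x,1}$, transmit the infection to each neighbor with probability at least $1-e^{-\lambda T}$. Hence if we scan an interval of length $L$ for sites with $T_{x,1}\geq T$, the density of such \emph{long} sites is of order $T^{-(1-\epsilon_3)}$; choosing $L\gtrsim T^{1-\epsilon_3}$ guarantees such sites exist in abundance, and once $\lambda T$ is large enough the infection is transmitted from one long site to another during their lifetimes. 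Conditions A and B play the role of regularity controls: A bounds $\int_{[0,t]} s\,\mu(ds)$ by the tail contribution so that truncation arguments do not lose too much, while B prevents the tail mass from concentrating at a single geometric scale, allowing the renormalization to iterate.

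The renormalization proceeds via scales $(L_k,T_k)_{k\geq 0}$ growing super-geometrically. A space-time box $B_k$ at scale $k$ is declared \emph{good} if the renewal environment inside $B_k$ is such that an infection entering any face of $B_k$ generates, with conditional probability close to $1$, an infected path exiting the opposite face and seeding many sites with long first gaps on that face. Using independence of the renewal processes at different sites together with the transmission estimate above, one shows inductively that $\P(B_k\text{ is bad})$ decays rapidly in $k$. A standard block/oriented-percolation argument applied to the resulting cascade of good boxes then produces an infinite infected path in the Harris graphical representation, yielding $P^{\lambda,\mu}(\tau^{\{0\}}=\infty)>0$.

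The main obstacle is that $\lambda$ is fixed and small, so one cannot simply require $\lambda T_k$ to be large at every level: the density bound $L_k\gtrsim T_k^{1-\epsilon_3}$ would then force $L_k/T_k\to 0$, incompatible with propagating the infection spatially at rate at most $\lambda$. The remedy is to carry out the bulk of the spatial spreading at a single very long scale, using intermediate scales only to preserve the infection locally by hopping among many long sites whose distribution is carefully controlled via B. A secondary technical difficulty is the loss of the strong Markov property: one must argue that conditioning on favorable events at scale $k$ does not spoil the renewal environment used at scale $k+1$, which is handled by exploiting the regenerative structure of the renewal processes and only using fresh intervals in the inductive step.
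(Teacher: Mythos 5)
There is a genuine gap in the proposal, centred on the fact that what matters for the renewal environment far from time $0$ is the \emph{current} renewal interval at time $t$, not the first gap $T_{x,1}$. Your driving observation—density of sites with $T_{x,1}\geq T$ is of order $\mu(T,\infty)\gtrsim T^{-(1-\epsilon_3)}$, so a window of length $T^{1-\epsilon_3}$ should contain a ``long'' site—is only useful near time $0$; once the infection has survived to a large time $t$ the first gaps of all fresh sites are long since exhausted, and the relevant event is that the renewal process at a fresh site misses a large interval $(t,Kt)$. Condition C gives a tail bound on a single interarrival time, but it does \emph{not} by itself give a uniform-in-$t$ lower bound on $P(\mathcal{R}\cap(t,Kt)=\emptyset)$: that is exactly the content of the paper's Lemma~\ref{lem1}, proved from conditions A) and B) via a delicate argument about the first interarrival time exceeding $t$ and the truncated mean. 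Your role for A) and B) as generic ``regularity controls'' misses this; without an analogue of Lemma~\ref{lem1} the inductive step of any multi-scale scheme stalls, because you have no quantitative control on the availability of long ongoing gaps away from the origin of time.

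A second, independent gap: you never address survival of the \emph{intermediate} sites while the infection hops from one long site to the next. In the paper this is condition (II) of the bad-event decomposition, and it requires showing that each intermediate site's renewal process misses a short interval located far from $0$ (Proposition~\ref{lem5}), which in turn needs the random-walk/Brownian coupling of Lemma~\ref{cor2} and Proposition~\ref{prop6}. Nothing in the proposal plays this role. Finally, the ``obstacle'' you identify is misstated: having $L_k/T_k\to0$ is \emph{not} incompatible with spreading at rate $\lambda$ (if anything it leaves more time to cross $L_k$ sites); the real difficulty is keeping the intermediate sites alive during the crossing, which the paper handles by making the crossing happen within a tiny window of length $(t_02^i)^\gamma$ and invoking Proposition~\ref{lem5}. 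The paper in fact avoids block/oriented-percolation renormalization entirely and instead builds a single explicit ladder of levels $L_0<L_1<\cdots$ with overlapping protection intervals $[t_02^i,t_02^{i+2}]$, with the spatial increments only of order $i\log t_0$ while the time scale doubles; your scheme neither reproduces this scaling nor supplies the two estimates (Lemma~\ref{lem1} and Proposition~\ref{lem5}) that make the paper's construction close.
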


\noindent{\bf Comment:} Theorem \ref{thm1} holds without the upper bound assumption in Condition C. This will be included in a 
forthcoming paper. 

In other words, under the above assumptions the random set
\begin{equation}
\label{1.2}
\cc=\{(y,t)\colon \text { there exists a path from } (0,0) \text{ to } (y,t)\},
\end{equation}
is unbounded with positive probability, for all $\lambda>0$.


\noindent {\it Remarks:}

\noindent (a) Given that whenever $d_{1} < d_{2} \in \IN$ we may couple RCPs in dimensions $d_{1}$ and $d_{2}$ in a natural and trivial manner, it suffices to prove Theorem \ref{thm1} for $d=1$.
We will restrict ourselves accordingly.


\noindent (b) In a companion paper \cite{FMV} we consider the situation when the tail of $\mu$ decays as $t^{-\alpha}$ with $\alpha >1$ and, under some rather stringent regularity conditions, we show that
the process on $\IZ$ has strictly positive critical value. Taken together, the two papers identify roughly speaking, tails of order $\frac{L (t)}{t}$ as representing a change of phase where $L(.) $ is a slowly varying function in the one dimensional case.

\noindent (c) A more robust, fairly standard argument, applying to all dimensions, may be given to show that $\lambda_{c} > 0$ if
$\mu$ has finite second moments. See Theorem 2 in \cite{FMV}.

\noindent (d) The critical value equal to $0$ is unusual among contact processes (but see \cite{CD09} and \cite{Du2}). It arises here
as a tunnelling effect where the process traverses larger and larger renewal intervals (with no deaths).
It can also be seen as a type of branching process where the number of offsprings is of infinite mean. But we have not found this mathematically useful, though branching process comparisons are useful in the companion paper \cite{FMV}.

\noindent (e) The proof indeed shows that the same result holds when, besides taking $d=1$, we force the paths to move only in increasing space direction, i.e. only the $N_{x,x+1}, x \in \mathbb{Z}_+$ are considered.

\noindent (f) For conditions A) and B) we can and will suppose that $M_1 \ = \ M_2 $ and $\epsilon _1 \ = \ \epsilon_2 $.  To see this first note that if A) and B) hold for given $M_i $, then
they will also hold for $\epsilon_1 $ and $\epsilon _2 $ replaced by $\epsilon_1 \wedge \epsilon_2 $.  Also
 in condition A) above, if it holds for a value $M_1 $, then it will hold for every $ M > M_1$ with the value $\epsilon_1 $ unchanged.   Thus given A) and B) holding for $(M_1, \epsilon_1)$ and
$(M_2, \epsilon_2)$ respectively where $M_2 \geq M_1 $, then A) and B) both hold for $(M_2, \epsilon_1 \wedge \epsilon_2)$.
If $M_2 < M_1 $, then fix integer $k$ so that $ M_2 ^ k \geq M_1 $.  We claim B) holds for $(M_2^k, \epsilon_2^k / k)$ and we have reduced our case to the previous one. For the claim simply
note that if $j_0 $ is a choice that maximizes $\mu[M_2^{kr+j},M_2^{kr+j+1} ]$ among integers $ 0 \leq j \leq k-1$, then
$$
\mu[M_2^{kr},M_2^{k(r+1)} ] \leq \ k \mu[M_2^{kr+j_0},M_2^{kr+j_0+1} ] \ \leq \ k \epsilon_2 ^{- k} \mu[M_2^{k(r+1)+j_0},M_2^{k(r+1)+j_0+1} ]
$$
$$
\leq  k \epsilon_2 ^{-k} \mu[M_2^{k(r+1)},M_2^{k(r+2)} ].
$$
Accordingly, we can and will suppose that $M_1 $ = $M_2 $ and for simpler reasons that $ \epsilon_1 \ = \ \epsilon_2$.

\noindent (g) Probability laws $\mu$ on $[0,\infty)$ satisfying the conditions of Theorem~\ref{thm1} include all those in the basin of
attraction of an $\a$-stable law, $\a\in(0,1)$. A simple example not in the basin of attraction of a stable law is any one such that
$$
0<\liminf_{t\to\infty}t^\a\mu(t,+\infty)<\limsup_{t\to\infty}t^\a\mu(t,+\infty)<\infty,
$$
with $\a\in(0,1)$. There are also examples with oscillating decay powers. We may take for instance

\begin{equation}\label{ex}
	\mu[t,+\infty)=\exp\left\{-\int_1^t\frac{\eps(s)}s\,ds\right\},\,t>1,
\end{equation}
with $\eps:[1,\infty)\to[\a,\b]$, where $0<\a<\b<1$.
It is straightforward, if tedious, to check that this satisfies A)-C)
and it is not hard to see that $\eps$ may be chosen in such a way that
%
\begin{equation*}
\liminf_{t\to\infty}t^\g \mu(t,+\infty)=0;\quad\limsup_{t\to\infty}t^\g \mu(t,+\infty)=\infty
\end{equation*}
for every $\g\in(\a,\b)$. Indeed, it suffices to define a sequence of numbers $(a_n)_{n\geq0}$ in $[1,\infty)$ increasing sufficiently fast, with $a_0=1$, and make $\eps (s)=\alpha$ if $s \in [a_{2n},a_{2n+1})$, $\eps (s)=\beta$ if $s \in  [a_{2n+1},a_{2n+2})$ for all $n$.



\vspace{0.3cm}

The proof of Theorem~\ref{thm1} largely depends on Lemma \ref{lem1} and Proposition \ref{lem5} proven respectively in Section \ref{sec:2} and in Section~\ref{sec:3}; in the last section we show that the above mentioned tunnelling effect guarantees $\cc$ to be unbounded with positive probability, for any $\lambda>0$.

\section{Reasonable probability of big gaps}
\label{sec:2}

In this and the next section, $(T_i)_{i \ge 1}$ will denote an i.i.d.~sequence having the renewal distribution $\mu$.

\begin{lemma} \label{lem1}
\noindent Under hypotheses A) and B) above, for all $K \ \in \ (0, \infty ) $
$$\inf_{t \geq 1} P ( ( t, Kt) \cap \mathcal{R} = \emptyset ) > 0.$$
\end{lemma}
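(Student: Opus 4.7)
The plan is to exhibit a single renewal interval that overshoots $Kt$, which automatically yields $(t,Kt)\cap \mathcal R = \emptyset$. Using independence of $T_{n+1}$ from $S_n$,
\[
P\bigl((t,Kt) \cap \mathcal R = \emptyset\bigr) \geq \sum_{n\geq 0} P(S_n \leq t,\, T_{n+1} > Kt) = U(t)\,\mu((Kt,\infty)),
\]
where $U(t) := \sum_{n\geq 0} P(S_n \leq t)$ with $S_0 = 0$. I would factor this bound as $[U(t)\mu((t,\infty))]\cdot [\mu((Kt,\infty))/\mu((t,\infty))]$ and treat each factor separately.

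For the ratio, assumption B is used. Setting $a_r := \mu([M_2^r, M_2^{r+1}))$, the inequality $a_{r+1} \geq \epsilon_2 a_r$ gives $a_r \leq \epsilon_2^{-1}\mu((M_2^{r+1},\infty))$, so $\mu((M_2^{r+1},\infty)) \geq \frac{\epsilon_2}{1+\epsilon_2}\mu((M_2^r,\infty))$ for $r \geq r_2$. Iterating this $\lceil \log_{M_2} K\rceil + 1$ times and using monotonicity of the tail to interpolate between consecutive levels $M_2^r$, one obtains $\mu((Kt,\infty))/\mu((t,\infty)) \geq c_K > 0$ uniformly in large $t$.

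For the first factor, the identity to exploit is
\[
U(t)\,\mu((t,\infty)) = P\bigl(\exists\, n \geq 0 : S_n \leq t,\, T_{n+1} > t\bigr) = P(S_{N^*-1} \leq t),
\]
where $N^* := \min\{i \geq 1 : T_i > t\}$ (the two displayed events coincide because any $T_i > t$ forces $S_i > t$, so the unique $n$ in the middle event equals $N^*-1$). Conditional on $\{N^* = n\}$, the variables $T_1,\dots,T_{n-1}$ are i.i.d.\ with law $T \mid T \leq t$; assumption A gives $\int_{[0,t]} s\,\mu(ds) \leq t\mu((t,\infty))/\epsilon_1$, hence $E[T \mid T \leq t] \leq 2t\mu((t,\infty))/\epsilon_1$ as soon as $\mu((t,\infty)) \leq 1/2$. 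Markov's inequality then yields $P(S_{n-1}\leq t \mid N^*=n) \geq 1/2$ whenever $n-1 \leq \epsilon_1/(4\mu((t,\infty)))$, and summing the geometric weights $(1-\mu((t,\infty)))^{n-1}\mu((t,\infty))$ over this range gives
\[
U(t)\,\mu((t,\infty)) \geq \tfrac{1}{2}\bigl(1 - e^{-\epsilon_1/4}\bigr),
\]
a positive constant independent of $t$.

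Combining the two factors yields $P((t,Kt)\cap \mathcal R = \emptyset) \geq c'_K > 0$ for all $t \geq T_0$ with a suitable threshold $T_0 = T_0(K)$; for the remaining range $t \in [1, T_0]$ the trivial lower bound $P((t,Kt)\cap \mathcal R = \emptyset) \geq \mu((KT_0,\infty))$ is positive by assumption C. The main obstacle is the first factor: the key observation is that A is exactly the moment-control condition on $T \mid T \leq t$ needed to fit many short inter-arrivals into $[0,t]$ before the first long one appears, enabling the big-jump estimate above.
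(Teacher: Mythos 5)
Your proof is correct and follows essentially the same route as the paper's: you isolate the first interarrival exceeding $t$ (your $N^*$, the paper's $i_0$), use assumption A) with Markov's inequality to ensure $S_{N^*-1}\leq t$, and use assumption B) to bound the tail ratio $\mu((Kt,\infty))/\mu((t,\infty))$ from below — the factorization through $U(t)\mu((Kt,\infty))$ is just a clean repackaging of the paper's events $A_1,A_2,A_3$. One small note: the lemma is stated under A) and B) alone, and the positivity of $\mu((KT_0,\infty))$ needed for $t\in[1,T_0]$ already follows from A) (which forces $\mu(t,M_1t)>0$ for all $t>t_0$, hence unbounded support), so the appeal to C) there is unnecessary.
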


\vspace{0.3cm}

\begin{proof}
\noindent Under assumption A),  $\mu$ has unbounded support. So it is only necessary to uniformly bound $P ( ( t, Kt ) \cap \mathcal{R} = \emptyset)$ away from zero for large $t$. In particular we may suppose that $t > t_{0}$.

\noindent Let $i_{0} = \inf \{i: T_{i} > t\}$  and consider the following events:

\noindent $A_{1}:= \{T_{i_{0}} \geq Kt\}$

\noindent $A_{2}:\{\sum_{i=1} ^{i_0 -1 }  T_{i}  < t \}$

\noindent $A_{3}:=\{i_{0} \leq  [\frac{\epsilon_{1}}{2 \mu(t, M_{2}t) }]\}$, where $[s]=\max\{k \in \mathbb{Z}, k \leq s\}$. 

\vspace{0.3cm}

We note that on event $A_1 \cap A_2$ we have that $\sum _{i=1} ^ {i_0 -1 } T_i   \ < \ t$ (definition of $A_2$) but $\sum _{i=1} ^ {i_0  } T_i \geq T_{i_0} \ge  Kt $ (event $A_1$)
and so by positivity of the $T_i $, $A_1 \cap A_2 \cap A_3 \ \subset A_1 \cap A_2  \ \subset \ \{  ( t, Kt) \cap \mathcal{R} = \emptyset \}$.  So it suffices to bound $P( A_1 \cap A_2 \cap A_3 )$ from below.
Note that event $A_1$ is independent of the events $A_2 $ and $A_3$. 

Now we have that, as the variables $T_j : \ j \geq \ 1 $ are i.i.d., given that $i_0 \ = \ r $, we have conditionally that the variables $T_i : \ i \leq r $ are independent, the variables $T_i :\ i \ < \ r $ having distribution $T_1 $ conditioned on being less than or equal to $t$, the random variable $T_r $ having distribution $T_1 $ conditioned on having value strictly greater than $t$.
In particular, the probability of event $A_1$ is exactly equal to
$$
\frac{\mu[Kt,+\infty)}{\mu(t,+\infty)}.
$$
As previously noted, we may choose $M_{1} = M_{2}$ without loss of generality and equally suppose that $t=M_1^{k}$ for some $k \ge r_2$
(since we can always increase value $K$).
We pick $r\geq1$ an integer so that $M_{1}^{r} > K$ so  $\mu[Kt,+\infty)\geq \mu[M_{1}^{r} t,+\infty) \geq \frac{\epsilon_{2}^{r} }{r}  \mu (t, M^r_{1}t)$. This follows by the argument given in the discussion of Remark (f) in the preceding section and the assumption that $t$ is an integer power of $M_1$.  Thus
\begin{equation}
\label{delta2}P(A_1) \geq \frac{\epsilon_{2}^{r} }{r}\left(1+ \frac{\epsilon_{2}^{r} }{r}\right)^{-1}=:\delta_2.
\end{equation}
For $P(A_3)$ we have
\begin{eqnarray*}
P(A_3) =1 \ - \ (1-\mu(t,+\infty ) ) ^{ [\frac{\epsilon_1}{2\mu(t, M_2 t)}]}  >  \,   1 \ - \ (1-\mu(t,+\infty) ) ^{ [\frac{\epsilon_1}{2\mu(t, \infty )}]}   \   >\epsilon_1/3 \\
\end{eqnarray*}
if $t_0 $ was fixed sufficiently high, as one easily verifies.
It remains to bound the conditional  probability $P(A_2^c| A_3)$.  Given that the random variables are all non-negative, this is bounded by
$$
\frac{1}{t} E(\sum_{i=1}^{ [\frac{\epsilon_1}{2\mu(t, M_2 t)}]} Y_i)
$$
where the $Y_i$ are i.i.d. random variables equal in distribution to $T_1 $ conditioned on being less than  or equal to $t$.

Using A) we have (again provided $t_0$ had been fixed large enough) that
$ \frac{9}{10}\epsilon_{1} E (Y_1) < t\mu (t, M_{1} t)$, and so by Markov's inequality
$$
P(A_2^c|A_3) \leq    [\frac{\epsilon_1}{2\mu(t, M_2 t)} ]E(Y_1) / t 
\leq 5/9 .
$$

\noindent Now, recalling \eqref{delta2}, for $t$ large, we have
\begin{eqnarray*}
&P (A_{1} \cap A_{2} \cap A_{3}) = \ P(A_3) P(A_2|A_3) P(A_1| A_2 \cap A_3)& \\ &= \ P(A_3) P(A_2|A_3) P(A_1) \geq
\ \frac{\epsilon_1}{3}  \frac{4}{9} \delta_2 \ = \
  \frac{4}{27}\epsilon_1 \delta_2.&
\end{eqnarray*}
\end{proof}
\vspace{0.3cm}
\section{Bounds on renewal sequence missing a far big gap}
\label{sec:3}

We wish to show that for $t$ large any interval in $[t, \infty )$ of length $t^ \epsilon $ for some small $\epsilon$ will be in the
complement of $\mathcal{R} $ outside of probability $t^{- \epsilon }$. It should be noted that even though the $\epsilon $ obtained might be very small indeed, the result will be applied to an exponentially increasing sequence of $t$s and so will yield exponentially decreasing upper bounds.   The strategy is to first show that given such an interval $I$, there will be a larger interval $J$ close by for which the result is true.  We then employ a simple coupling argument to  transfer  to $I$ the bounds for $J$. This will be the main result of this section, stated as Proposition \ref{lem5} below.

\noindent {\it Notation:} Since no confusion arises we use $|A|$ to denote the cardinal of $A$ when $A\subset \mathbb R$ is finite, and also the length for more general Borel sets.

\begin{lemma} \label{lem2}
\noindent Under hypothesis C), there exists $M_3 < \infty$ so that if $I \subset \IR_{+}$ is an interval of length $t \geq M_{3}$, then the probability that $\vert \mathcal{R} \cap I \vert >t^{1 - \epsilon_{3}} \log^{2} t$ is less than $\frac{1}{t}$.
\end{lemma}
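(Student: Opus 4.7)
The plan is to reduce the statement to a bound for the number of renewals on $[0,t]$ of a fresh renewal process started at $0$, and then apply the elementary inequality ``a small sum of nonnegative summands forces every summand to be small'' together with the lower tail bound in hypothesis C).

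For the reduction, I would write $I=(a,a+t]$ and set $\tau:=\inf\{n\ge 1:S_n>a\}$, where $S_n=T_1+\cdots+T_n$. Since $(T_i)_{i\ge1}$ is i.i.d.\ and $\tau$ is a stopping time, the sequence $T_{\tau+1},T_{\tau+2},\ldots$ is i.i.d.\ with law $\mu$ and independent of $(T_1,\ldots,T_\tau)$. Setting $S_m':=T_{\tau+1}+\cdots+T_{\tau+m}$, any renewal $S_{\tau+m}$ lying in $I$ must satisfy $S_m'=S_{\tau+m}-S_\tau<a+t-a=t$. Hence
$$
|\mathcal R\cap I|\le 1+\#\{m\ge 1:S_m'<t\},
$$
so, writing $k:=\lceil t^{1-\epsilon_3}\log^2 t\rceil$, it is enough to show $P(S_{k-1}'\le t)\le 1/t$ for $t$ larger than some suitably chosen $M_3$.

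For the main estimate, I would observe that since the $T_{\tau+i}$ are nonnegative, $S_{k-1}'\le t$ implies $T_{\tau+i}\le t$ for every $i=1,\ldots,k-1$. Independence then gives
$$
P(S_{k-1}'\le t)\le \bigl(1-\mu(t,+\infty)\bigr)^{k-1}\le \exp\!\bigl(-(k-1)\,t^{-(1-\epsilon_3)}\bigr),
$$
where the last step uses only the lower bound in C), namely $\mu(t,+\infty)\ge t^{-(1-\epsilon_3)}$ for $t\ge M_3$. Plugging in our choice of $k$ makes the exponent $\le -\tfrac12\log^2 t$ for $t$ large, which is $\le -\log t$ once $t\ge e^2$, delivering the desired $1/t$ bound after enlarging $M_3$ if necessary.

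There is essentially no substantial obstacle: the proof is short, and its only nontrivial ingredient is that $\mu$ has polynomially heavy tails from below. The one delicate point worth a sentence of care is the freshness of the post-$\tau$ interarrivals, which is just the standard stopping-time property for i.i.d.\ sequences. It is worth noting that only the lower half of C) is invoked here; the matching upper bound will play its role elsewhere in the argument, but not in this lemma.
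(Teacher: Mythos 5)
Your proof is correct and follows essentially the same route as the paper: reduce to a single window via the strong Markov property at the first renewal past $a$ (you simply make this explicit with the stopping time $\tau$), note that having more than $k$ renewals in an interval of length $t$ forces $k-1$ consecutive interarrival times to all be at most $t$, and then control $(1-\mu(t,+\infty))^{k-1}$ using only the lower tail bound in C). The paper phrases the same core estimate through $N_0=\inf\{n:T_n>t\}$ and the inequality $P(|\mathcal R\cap I|>m)\le P(N_0\ge m)=(1-\mu(t,+\infty))^{m-1}$, which is identical to your bound.
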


\begin{proof}
\noindent By the strong Markov property applied when the renewal process first hits $I$, it is enough to treat the case $I = [0, t].$

\noindent Let $N_{0} = \inf \{n: T_{n} > t\}$. If $N_{0} < m$ then $\vert \mathcal{R} \cap I \vert \leq m$ and
so (increasing $t$ if necessary),
\begin{eqnarray*}
P ( |\mathcal{R} \cap I| > [t^{1 - \epsilon_{3}} \log^{2} t ]) &\leq & P ( N_{0} \geq [t^{1 - \epsilon_{3}} \log^{2} t])\\
&\leq & (1- t^{-(1 - \epsilon_{3})})^{t^{1 - \epsilon_{3}}\log^{2} t -1}\\
&\leq & \frac{1}{1 - t^{-(1 - \epsilon_{3})}} e^{- \log^{2}t } \leq \frac{1}{t},
\end{eqnarray*}
for $t$ large.
\end{proof}

{\it Remark:} For our purposes the bound $\frac{1}{t} $ is somewhat arbitrary and the ``extra" factor $\log^2(t) $ is simply an annoyance.

The next result translates the above bound into the existence of an interval of reasonable size which will, with high probability, be missed by the renewal process.
\begin{cor} \label{cor1}
\noindent There exists a finite constant $t_{1}$ such that for all $t \geq t_{1}$, for all interval  $I$ of length $t$ in $\IR_{+}$ there exists an interval $J \subset I $ of length $t^{\frac{\epsilon_{3}}{2}}$ so that
$$P( \mathcal{R} \cap J \not= \emptyset ) \leq t^{-\frac{\epsilon_{3}}{3}}.$$
\end{cor}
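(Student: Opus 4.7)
The plan is a first-moment / pigeonhole argument on top of Lemma \ref{lem2}. Partition the interval $I$ into $K:=\lfloor t^{1-\epsilon_3/2}\rfloor$ pairwise disjoint subintervals $J_1,\dots,J_K$, each of length $t^{\epsilon_3/2}$ (with possibly a short leftover piece). If we can show that the average over $i$ of $P(\mathcal{R}\cap J_i\neq\emptyset)$ is at most $t^{-\epsilon_3/3}$, we pick one of the $J_i$ that attains the average and are done.

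For the averaging, note that $\sum_{i=1}^K \mathbf{1}_{\mathcal{R}\cap J_i\neq\emptyset}\leq |\mathcal{R}\cap I|$, so
\[
\sum_{i=1}^K P(\mathcal{R}\cap J_i\neq\emptyset)\leq E\bigl[|\mathcal{R}\cap I|\bigr].
\]
The subtle point is that $E[|\mathcal{R}\cap I|]$ is not directly bounded by Lemma \ref{lem2}; we only have a tail bound. So I would split on the event $G:=\{|\mathcal{R}\cap I|\leq t^{1-\epsilon_3}\log^2 t\}$. For each $i$,
\[
P(\mathcal{R}\cap J_i\neq\emptyset)\leq P\bigl(\mathcal{R}\cap J_i\neq\emptyset,\,G\bigr)+P(G^c)\leq P\bigl(\mathcal{R}\cap J_i\neq\emptyset,\,G\bigr)+\tfrac{1}{t},
\]
by Lemma \ref{lem2}. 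Summing over $i$ and using $\sum_i\mathbf{1}_{\mathcal{R}\cap J_i\neq\emptyset}\mathbf{1}_G\leq t^{1-\epsilon_3}\log^2 t$ gives
\[
\sum_{i=1}^K P\bigl(\mathcal{R}\cap J_i\neq\emptyset,\,G\bigr)\leq t^{1-\epsilon_3}\log^2 t.
\]
Dividing by $K\geq \tfrac12 t^{1-\epsilon_3/2}$ (for $t$ large) yields an index $i^*$ with $P(\mathcal{R}\cap J_{i^*}\neq\emptyset,\,G)\leq 2\,t^{-\epsilon_3/2}\log^2 t$, hence
\[
P(\mathcal{R}\cap J_{i^*}\neq\emptyset)\leq 2\,t^{-\epsilon_3/2}\log^2 t+\tfrac{1}{t}.
\]
Since $\log^2 t=o(t^{\epsilon_3/6})$ and $\epsilon_3<1$, the right-hand side is at most $t^{-\epsilon_3/3}$ for all $t\geq t_1$ with $t_1$ chosen large enough, and $J:=J_{i^*}$ does the job.

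There is no real obstacle; the only care needed is the split on $G$ versus $G^c$, since one cannot afford to bound $E[|\mathcal{R}\cap I|]$ by naive first-moment estimates (the interarrival law $\mu$ may have infinite mean under hypothesis C). The logarithmic factor from Lemma \ref{lem2} is harmlessly absorbed by moving from exponent $\epsilon_3/2$ down to $\epsilon_3/3$.
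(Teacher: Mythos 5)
Your proposal is correct and is essentially the same first-moment averaging argument that the paper uses: both proofs bound the average hitting probability over a family of candidate subintervals by $E[\,|\mathcal{R}\cap I|\,]$, control that expectation using the tail bound of Lemma~\ref{lem2} (with an explicit split on the good event $G$ in your version, and an implicit ``$1+\cdots$'' correction term for the small-probability overflow in the paper's), and then conclude that some subinterval must achieve a small hitting probability. The only cosmetic difference is that the paper averages continuously over all $u\in I$ (via $E\bigl[\,|(\mathcal{R}+[-\tfrac12 t^{\epsilon_3/2},\tfrac12 t^{\epsilon_3/2}])\cap I|\,\bigr]$ and Fubini, arriving at a contradiction), whereas you average discretely over a partition of $I$ into $\lfloor t^{1-\epsilon_3/2}\rfloor$ pieces and pick the minimizer by pigeonhole; your discrete version also sidesteps the minor edge issue of the chosen $J$ possibly poking out of $I$.
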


\vspace{0.3cm}

\begin{proof}
For $u \in \IR $ and nonempty $A \ \subset \ \IR $, we write $d(u,A) \ = \ \inf \{ |u-y|: y \ \in \ A \} .$
\noindent Let $g(u) = P [d(u, \mathcal{R}) >\frac{1}{2}t^{\frac{\epsilon_{3}}{2}} ], \hspace{0.3cm} u \in I.$

\noindent If  $g(u) > 1- t^{-\frac{\epsilon_{3}}{3}}$ for some $ u \in I$, then our result follows by taking
$J=[ u-\frac12t^{\frac{\epsilon_{3}}{2}}, u+\frac12t^{\frac{\epsilon_{3}}{2}} ]$. Thus, let us suppose no such $u$ exists. Then, 
\begin{equation*}
E ( \vert (\mathcal{R} + [ - \frac{1}{2} t^{\frac{\epsilon_{3}}{2}}, \frac12t^{\frac{\epsilon_{3}}{2}}]) \cap I \vert ) \geq t^{1 - \frac{\epsilon_{3}}{3}}.
\end{equation*}
But by the preceding Lemma, taking $t_1$ large we have

\begin{equation*}
E ( \vert (\mathcal{R} + [ - \frac{1}{2}t^{\frac{\epsilon_{3}}{2}}, \frac{1}{2}t^{\frac{\epsilon_{3}}{2}}]) \cap I \vert ] \leq 1 + ( t^{1 -\epsilon_{3}} \log^{2} t)  t^{\frac{\epsilon_{3}}{2}} < t^{1 - \frac{ \epsilon_{3}}{3}}\hspace{0.2cm}  \textrm{for} \hspace{0.2cm} t\geq t_{1}.
\end{equation*}
\end{proof}

Having established that for an interval far away from $0$, there will be a large interval which is ``missed" by the renewal sequence with high probability, we wish to
use a simple coupling argument to show that this property must hold for all intervals ``reasonably close" to this interval.  The coupling between renewal processes that we use is really a coupling between random walks as explained in $e.g. \  \cite{O}$ or $ \cite{denH} $.

\noindent Given the law of the $\{T_{i}\}_{i\ge 1}$ there exists a bounded interval $I$ such that $P ( T_{1} \in I ) > 0$ and $P ( T_{1} = T_{2} \vert  T_{1}, T_{2} \in I ) < 1.$

\noindent Given $V_{0} > 0$, the $(V_{0})-$coupling between two identically distributed renewal sequences (sharing the above renewal time distribution), $\{T_{i}\}_{i \geq 1}$ and $\{\tilde T_{i}\}_{i \geq 1}$ (here the $T_{i}, \tilde T_{j}$ are ``interarrival times" and so identically distributed) is as follows:

\vspace{0.3cm}
\noindent Let $ N_{V_{0}} = \inf \{k : \sum_{i=1}^{k}(T_{i} - \tilde T_{i}) > V_{0}\}.$

\noindent For $i > N_{V_{0}} \hspace{0.3cm} T_{i} = \tilde T_{i}$, independent of preceding realizations

\noindent For $i \leq N_{V_{0}}$ we choose $(T_{i}, \tilde T_{i})$ to be independent of preceding realizations and with the property that
\vspace{0.3cm}

a) $T_{i} \in I \Longleftrightarrow \tilde T_{i} \in I$;

b) $T_{i} \in I^{c} \Rightarrow \tilde T_{i} = T_{i}$;

c) given $\{T_{i} \in I\} = \{\tilde T_{i} \in I\} = \{T_{i} , \ \tilde T_{i} \in I\}$  the variables $T_{i}$ and $\tilde T_{i}$ are i.i.d. with common distribution equal to that of $T_{1}$ conditioned on $\{T_{1} \in I\}$.

\vspace{0.3cm}

\noindent It is immediate that $(\sum_{i=1}^{n}(T_{i} - \tilde T_{i}))_{n \geq 0}$ is equal in law to a random walk with distribution $(T_{1} - \tilde T_{1})$ stopped at the hitting time for $(V_{0}, +\infty).$

\vspace{0.3cm}

\noindent Given that the law of $T_{1} - \tilde T_{1}$ is symmetric and non trivial, so that $E(T_{1} - \tilde T_{1}) = 0$ and $ E ((T_{1} - \tilde T_{1})^{2} ) < \infty$:


\begin{lemma} \label{cor2}
\noindent There is a constant $K$, depending on $I$ and the distribution of $T_{1}$ conditional on being in $I$, so that for all $t > 0 $ and $V_0 \geq 1$
$$P ( N_{V_{0}} > t) \leq \frac{K V_{0}}{\sqrt{t}}.$$
\end{lemma}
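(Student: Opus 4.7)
The plan is to recognise $N_{V_{0}}$ as the first-passage time above level $V_0$ of a mean-zero, symmetric, bounded, non-degenerate random walk, and then to apply the classical first-passage-time estimate for such walks.

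First, I would read off the properties of the increments. By the $(V_0)$-coupling the variables $\Delta_i := T_i - \tilde T_i$ are i.i.d., symmetric (swap $T_i \leftrightarrow \tilde T_i$), bounded in absolute value by $|I|$, and satisfy $p := P(\Delta_i \neq 0) = P(T_1 \in I)\cdot P(T_1 \neq T_2 \mid T_1, T_2 \in I) > 0$ by the very choice of $I$. Hence $S_n := \sum_{i=1}^n \Delta_i$ is a mean-zero random walk with variance $\sigma^2 \in (0, |I|^2]$ depending only on $I$ and the conditional law of $T_1$ given $T_1 \in I$, and $N_{V_0}$ is its first-passage time above $V_0$.

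For the passage-time estimate I would proceed in two steps. First, I would extract the sub-walk $W_m := \sum_{j=1}^m \xi_j$ formed by the non-zero increments, whose step $\xi_j$ is symmetric, bounded, and non-degenerate. A strong-Markov/reflection argument at $\tau_{V_0} := \inf\{m : W_m > V_0\}$, combined with the Kolmogorov--Rogozin concentration function bound (applicable because the step of $W$ is non-degenerate), yields
\[
P\!\left(\max_{m' \le m} W_{m'} \le V_0\right) \le C\,(V_0 + 1)/\sqrt{m},
\]
for a constant $C$ depending only on the distribution of $\xi$. Second, a Hoeffding inequality for the Bernoulli$(p)$ indicators $\Ind_{\{\Delta_i \neq 0\}}$ shows that the number $N'(t)$ of non-zero increments among $\Delta_1, \ldots, \Delta_t$ is at least $tp/2$ except on an event of probability $e^{-c t p}$. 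Combining,
\[
P(N_{V_0} > t) = P\!\left(\max_{m' \le N'(t)} W_{m'} \le V_0\right) \le \frac{2C(V_0 + 1)}{\sqrt{tp/2}} + e^{-c t p},
\]
and for $V_0 \ge 1$ this is at most $K V_0 / \sqrt{t}$ for a suitable constant $K = K(I, \mu|_I)$.

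The main obstacle will be making the reflection step fully rigorous when the step $\xi_j$ has atoms. The clean identity $P(\max_{m' \le m} W_{m'} \le V_0) = 2 P(0 \le W_m \le V_0)$ is exact only for continuous symmetric walks; in the atomic case it carries corrections related to the overshoot $W_{\tau_{V_0}} - V_0 \in [0, |I|]$, which are themselves $O(1/\sqrt{m})$ by Kolmogorov--Rogozin and hence absorbable into the final bound. A cleaner alternative bypasses reflection entirely via the fluctuation theory of ladder variables: for a symmetric, finite-variance walk, Sparre--Andersen gives $P(\tau_1 > n) \le C_1/\sqrt{n}$ for the first ascending ladder epoch and the Wiener--Hopf factorisation yields a finite mean for the ladder height; standard renewal estimates combined with a truncation of the heavy-tailed ladder times then produce the same $KV_0/\sqrt{t}$ bound.
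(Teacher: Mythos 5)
Your argument is correct, and it establishes the lemma, but it takes a genuinely different route from the paper's. You work entirely in discrete time: you recognise $N_{V_0}$ as the first-passage time above level $V_0$ of the mean-zero, symmetric, bounded, non-degenerate walk $S_n=\sum_{i\le n}(T_i-\tilde T_i)$, and you control $P(\max_{k\le t}S_k\le V_0)$ via a discrete reflection argument plus a Kolmogorov--Rogozin anti-concentration bound (with a Sparre--Andersen/Wiener--Hopf alternative sketched). The paper instead uses the Skorokhod embedding: it realises $(S_n)$ as $(B_{S_n})$ for a Brownian motion $B$ with i.i.d.\ embedding times $S_{i}-S_{i-1}$ having all moments, then splits $P(N_{V_0}>t)$ into the event that the clock $S_t$ is atypically large (super-polynomially small) plus the event that $B$ has not yet reached $V_0+3M$ by Brownian time $2tE(S_1)$, the latter giving the $V_0/\sqrt{t}$ factor directly from $P(\sup_{s\le u}B_s<a)=P(|B_u|<a)\le\sqrt{2/\pi}\,a/\sqrt{u}$. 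The embedding approach buys an \emph{exact} reflection principle (no overshoot correction, no anti-concentration lemma needed) at the cost of introducing the auxiliary Brownian motion; your approach is more elementary in its ingredients but must explicitly absorb the overshoot term $W_{\tau_{V_0}}-V_0\in[0,|I|]$, which you correctly flag. Your extraction of the sub-walk of non-zero increments together with the Hoeffding step is harmless but unnecessary: Kolmogorov--Rogozin (like the Skorokhod embedding) handles the atom of $T_1-\tilde T_1$ at zero directly, since the concentration-function lower bound $1-Q(T_1-\tilde T_1;0)\ge p>0$ is all that is required, so one can apply the reflection-plus-anti-concentration estimate to $(S_n)$ itself with $m=t$ rather than to the thinned walk. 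Also, note a minor typo in the paper's proof (``then $S_k>V_0$'' should read ``then $B_{S_k}>V_0$''), which is the step your reflection argument replaces.
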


\begin{proof}

We fix $M < \infty $ so that $I \ \subset \ [0,M]$, so that a.s. $ \forall i \ T_i - \tilde T_{i} \subset [-M,M]$.

We analyze the random walk $(\sum_{i=1}^{n}(T_{i} - \tilde T_{i}))_{n \geq 0}$ by the standard Brownian embedding for symmetric random variables.  Given standard Brownian motion
$(B_s)_{s \geq 0}$, independent of the $T_i, \ \tilde T_{i} \ i \geq 1 $ random variables,we define stopping times (for the natural filtration of $B_.$ augmented by the  $T_i, \ \tilde T_{i} \ i \geq 1 $)
$$
S_0 \ = \ 0; \quad S_{i+1} \ = \ \inf \{t> S_i: |B_t-B_{S_i}| \ = \ |T_{i+1}- \tilde T_{i+1}|\}.
$$
Then $(B_{S_i})_ {i \geq 1} $ is equal in law to our random walk and the variables $S_i - S_{i-1} : \ i \geq 1 $ are i.i.d. random variables having all moments finite.

We define the stopping time
$$
 \sigma \  =  \ \inf \{ s: B_s > V_0 +3M \}.
$$
We now note that if $k = \max \{j: S_j \leq \sigma\} $, then $S_k > V_0 $.
 Then (without loss of generality) taking $t$ to be an integer, $P( N_{V_0} > t)$ is less than or equal to
$$
P(S_t > 2t E(S_1) ) \ + \ P( \sup _{s \leq 2t E(S_1)} B_s \ < V_0 + 3M).
$$
Given the existence of all moments of $S_1$, the first term decays to zero faster than any power of $t$ so the result follows from Brownian hitting probabilities.
\end{proof}
\vspace{0.3cm}

\begin{prop} \label{prop6}
There exists $K < \infty$ so that for all $ n$ and all $ V_0 \leq 2^{n}$ there is a coupling between renewal sequences $\{T_{i}\} , \{\tilde T_{i}\}$ such that $\sum^{N_{V_0}}_{i = 1} T_{i}$ and $\sum^{N_{V_0}}_{i = 1} \tilde T_{i}$ are both less than $2^{Kn}$ outside a set of probability $C 2^{-n}$ for $C$ universal and $n$ large.
\end{prop}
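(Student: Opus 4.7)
\textbf{Proof plan for Proposition \ref{prop6}.} The strategy is to combine the diffusive bound on $N_{V_0}$ from Lemma \ref{cor2} with the polynomial tail estimate from hypothesis C) via a union bound over the first $N_{V_0}$ interarrival times. The $(V_0)$-coupling is already specified, so no new coupling construction is needed; the task reduces to verifying that, on an event of probability $1-O(2^{-n})$, both $N_{V_0}$ and the maximum of the interarrival times it encounters are simultaneously controlled. Their product then bounds the two sums.

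First, I apply Lemma \ref{cor2} with $t=2^{4n}$. Since $V_0 \leq 2^n$, this yields $P(N_{V_0} > 2^{4n}) \leq K V_0/2^{2n} \leq K/2^n$, so $N_{V_0} \leq 2^{4n}$ outside probability $K/2^n$.

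Second, I observe that under the coupling the marginal law of each $T_i$ (and each $\tilde T_i$) is exactly $\mu$. This is verified inductively: at step $i$, whether or not $N_{V_0}$ has already occurred at step $i-1$, the conditional law of $T_i$ given the past is $\mu$, since on the post-$N_{V_0}$ segment $T_i$ is drawn independently from $\mu$, while on the pre-$N_{V_0}$ segment properties (a)--(c) give the marginal $\mu(I^c)(\mu|_{I^c}) + \mu(I)(\mu|_I) = \mu$. Hypothesis C) then gives $P(T_i > 2^{\alpha n}) \leq 2^{-\alpha \epsilon_3 n}$ for $n$ large. Choosing $\alpha = 5/\epsilon_3$ and union-bounding over $i \leq 2^{4n}$ yields $P(\max_{i \leq 2^{4n}} T_i > 2^{\alpha n}) \leq 2^{4n - \alpha \epsilon_3 n} = 2^{-n}$, and identically for $\tilde T_i$.

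On the intersection of these three good events (total failure probability $\leq (K+2)/2^n$) we have
$$\sum_{i=1}^{N_{V_0}} T_i \,\leq\, N_{V_0} \cdot \max_{i \leq N_{V_0}} T_i \,\leq\, 2^{4n} \cdot 2^{\alpha n} \,=\, 2^{(4+5/\epsilon_3)n},$$
and the identical bound for $\sum \tilde T_i$, so the proposition holds with any fixed $K > 4 + 5/\epsilon_3$ and $C = K_{\text{Lem}} + 2$. The only point requiring care is verifying that the coupling preserves the $\mu$-marginals of each $T_i$, so that C) can be applied term-by-term; aside from that, the exponents are comfortably non-tight and the whole argument is routine union bounding. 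Thus no genuinely hard step is anticipated.
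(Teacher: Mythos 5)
Your proof is correct and takes essentially the same route as the paper: apply Lemma \ref{cor2} with $t=2^{4n}$ to control $N_{V_0}$, then union-bound over the first $2^{4n}$ interarrival times using the polynomial tail from hypothesis C). The paper bounds $P(\sum_{i\le 2^{4n}}T_i\ge 2^{Kn})\le 2^{4n}P(T_1\ge 2^{(K-4)n})$ in one line rather than separately controlling the maximum, but that is the identical union-bound argument written more compactly; your explicit check that the coupling preserves the $\mu$-marginals of each $T_i$ and $\tilde T_i$ is a reasonable (and correct) addition that the paper leaves implicit.
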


\vspace{0.3cm}

\begin{proof}
\noindent Since, by Lemma \ref{cor2}, $P( N_{V_0} \geq 2^{4n}) \leq K  2^{-n}$, it remains to show that  $P ( \sum_{i=1}^{2^{4n}} T_{i} \geq 2^{Kn} ) \leq 2^{-n}$ for $n$ large. But this follows easily from hypothesis C): $P(\sum_{i=1}^{2^{4n}} T_{i} \geq 2^{Kn} ) \leq 2^{4n} P (T_1 \geq 2^{(K-4)n}) \ \leq \ 2^{4n}  2^{-(K-4)n \epsilon_3}$ which is less than $2^{-n}$ for all $n$ large if $K $ was fixed large enough.
\end{proof}

\vspace{0.3cm}
We are now ready to prove the main result of this section.  More refined estimates (see $e.g. \ \cite{E}$ and more recently $ \cite{Chi}$ or $ \cite{CD}$) are available but they require greater regularity on the tails of the distribution of $T_1$.

\begin{prop} \label{lem5}
There exists $ \epsilon_{4} > 0$ so that for $n$ large and for all intervals  $[s, t]$ with $2^{n}\leq s \leq t \leq s +2^{n \epsilon_{4}}$, one has
\begin{equation}
\label{gap2}
P ( \mathcal{R} \cap [s,t] \not= \emptyset ) \leq 2^{-n \epsilon_{4}}.
\end{equation}
\end{prop}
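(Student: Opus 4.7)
The proof I envisage follows the two-step strategy described in the preamble to Section~\ref{sec:3}: given a target interval $[s,t] \subset [2^n,\infty)$ of length at most $2^{n\epsilon_4}$, locate a much longer nearby interval $J$ which is missed by $\mathcal{R}$ with high probability (via Corollary~\ref{cor1}) and transport this bound back to $[s,t]$ via the coupling of Proposition~\ref{prop6}.

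Fix a large integer constant $K' > K$, with $K$ as in Proposition~\ref{prop6}, and set $L := 2^{\lceil n/K' \rceil}$. Apply Corollary~\ref{cor1} to the interval $\tilde I := [t, t+L]$ (legitimate for $n$ large so that $L \ge t_1$) to obtain a subinterval $J = [a,b] \subset \tilde I$ of length $L^{\epsilon_3/2}$ satisfying $P(\mathcal{R} \cap J \neq \emptyset) \le L^{-\epsilon_3/3}$. Set $V_0 := a - s$; since $s \le t \le a \le t + L$, one has $V_0 \in [0, L + (t-s)] \subset [0, 2L]$ for $n$ large. I would then build a $V_0$-coupling of $\mathcal{R}=\{S_n\}$ with a second renewal sequence $\tilde{\mathcal{R}}=\{\tilde S_n\}$ of the same law; after the coupling step $N_{V_0}$, and possibly relabelling the two sequences, one has $\tilde S_n = S_n + W$ for every $n > N_{V_0}$, with $V_0 < W \le V_0 + M$, where $M$ is the diameter of the bounded interval used in the coupling construction.

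By Proposition~\ref{prop6} applied with its parameter chosen as an integer of order $n/K'$, both $S_{N_{V_0}}$ and $\tilde S_{N_{V_0}}$ are bounded by $2^{K(n/K' + O(1))}$ outside an event of probability $O(2^{-n/K'})$. Because $K' > K$, this coupling-completion time is strictly less than $s \ge 2^n$ for $n$ large. On the complementary good event, every point of $\mathcal{R}$ in $[s,t]$ is post-coupling and therefore forces $\tilde{\mathcal{R}}$ to meet $[s+W, t+W]$. Since $s + W \ge s + V_0 = a$ and $t + W \le t + V_0 + M = a + (t-s) + M$, this interval sits inside $J = [a, a + L^{\epsilon_3/2}]$ provided $(t-s) + M \le L^{\epsilon_3/2}$, which holds for $n$ large whenever $\epsilon_4 < \epsilon_3/(2K')$.

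Combining the two estimates,
$$
P(\mathcal{R} \cap [s,t] \neq \emptyset) \le P(\tilde{\mathcal{R}} \cap J \neq \emptyset) + P\bigl(S_{N_{V_0}} \ge s\bigr) \le L^{-\epsilon_3/3} + C\,2^{-n/K'},
$$
which is at most $2^{-n\epsilon_4}$ for any $\epsilon_4 < \min\{\epsilon_3/(3K'),\,1/K'\}$ and $n$ sufficiently large. The main delicate point is the balancing of scales: $L$ (hence $V_0$) must be large enough that $|J| = L^{\epsilon_3/2}$ comfortably exceeds the length of $[s,t]$ even after a bounded overshoot $W - V_0 \le M$, yet the coupling time $\sim 2^{Kn/K'}$ must complete before the walk reaches height $\sim 2^n$. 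Taking $K'$ sufficiently large relative to $K$ achieves both goals simultaneously, and the slack built into $|J|$ absorbs the $O(1)$ overshoot effortlessly.
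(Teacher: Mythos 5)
Your proposal is correct and follows essentially the same two-step strategy as the paper: apply Corollary~\ref{cor1} to a nearby interval of carefully chosen length to produce a large gap $J$ missed with high probability, then transport the bound back to $[s,t]$ via the $(V_0)$-coupling and Proposition~\ref{prop6}, choosing the scales (your $K'$ versus the paper's $\epsilon_4 < \epsilon_3^2/(27K^2)$) so the coupling completes before height $2^n$ while $|J|$ dominates $|[s,t]|$ plus the $O(1)$ overshoot. The only cosmetic difference is that you apply the corollary to $[t,t+L]$ rather than an interval anchored at $s$; the minor possibility $V_0<1$ (outside the range of Lemma~\ref{cor2}) is trivially handled by taking $V_0\vee 1$, which still lands inside $J$.
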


\vspace{0.3cm}

\begin{proof}
\noindent
We take $\epsilon_{4} < \frac{\epsilon^{2}_{3}}{27 K^{2}}$, where $\epsilon_{3}$ is a constant as in hypothesis C), and $K$ is as in Proposition \ref{prop6} (absorbing the constant $C$ therein as well).
For $s \ge 2^n$, consider the interval $[s, s+ 2^{\frac{4 n \epsilon_{4}}{\epsilon_{3}}}]$, to which we apply Corollary \ref{cor1}, guaranteeing the existence of a subinterval $J $ of length $2^{2n \epsilon_{4}}$ so that
\begin{equation}
\label{gap}
P (\mathcal{R} \cap J \not= \emptyset ) \leq 2^{-\frac{4}{3}n \epsilon_{4}}.
\end{equation}

\vspace{0.3cm}

\noindent We now choose $V_{0}$ so that $[s + V_{0}, t + V_{0} ]$ is within $J$ and at distance at least $\frac{2^{n \epsilon_{4}}}{3}$ from $J^{c}$; note that $\frac{2^{n \epsilon_{4}}}{3} \leq V_{0} \leq 2^{4 n \frac{\epsilon_{4}}{\epsilon_{3}}}$.

\vspace{0.3cm}

\noindent We couple together renewal sequences $\{T_{i}\}, \{\tilde T_{i}\}$ so that outside a set of probability $K 2^{-2 n \frac{\epsilon_{4}}{\epsilon_{3}}}$ one has:

$$N_{V_{0}} \leq 2^{12 n \frac{\epsilon_{4}}{\epsilon_{3}}}$$
and
$$\max\{\sum_{i=1}^{N_{V_{0}}}T_{i}, \sum_{i=1}^{N_{V_{0}}}\tilde T_{i}\}\leq 2^{4 n K \frac{\epsilon_{4}}{\epsilon_{3}}} < 2^{n}.$$

\noindent Thus we have (where the superscript to $\mathcal{R}$ indicates the corresponding interarrival sequence):
$$P ( \mathcal{R}^{\tilde T} \cap [s, t] \not= \emptyset ) \leq P ( \mathcal{R}^{T} \cap J \not= \emptyset ) + K 2^{-2 n \frac{\epsilon_{4}}{\epsilon_{3}}}.$$

By \eqref{gap}, we have then that $ P ( \mathcal{R}^{\tilde T} \cap [s, t] \not= \emptyset ) \leq 2^{-\frac{4}{3}n \epsilon_{4}}+ K 2^{-2 n \frac{\epsilon_{4}}{\epsilon_{3}}}. $
For all $n$ large we must have that this latter bound is below $2^{-n \epsilon_{4}}$, which was precisely the desired bound \eqref{gap2}.

\end{proof}

\section{Proof of the theorem}
\label{proof-thm}

In proving that the critical value is equal to $0$, we have to show that for any $\lambda > 0$ there is a strictly positive chance that the contact process on $\IZ$ starting from a single ``infected" site survives for all time.  In fact we really only consider the contact process on $\IN $.  In turn our argument becomes  that, loosely speaking, having infected a ``large" interval (depending on $\lambda$) survival becomes probable.
\vspace{0.3cm}

\noindent We can assume that for $t_{0}$ to be chosen later, the renewal process for site $0$ has $T_{0,1} > 4 t_{0}$. We also fix $\gamma=\epsilon_4>0$ given by Proposition \ref{lem5}.

\noindent We define recursively levels $L_0, L_{1}, L_{2}, \dots$: $L_{0}=0$ and for $i \ge 1$,
\begin{equation}
\label{level}
L_{i} = \inf \{ k > L_{i-1} : D_{k} \cap \{ k \} \times [ t_{0} 2^{i} , t_{0} 2 ^{i + 2} ] = \emptyset \}.
\end{equation}

\begin{remark}
 \label{markov}
 Note that given $\mathcal{F}_{L_{i}}:=\sigma ( L_i, D_{k} : k \leq L_{i} )$ the renewal processes at sites $(L_{i} + j)_{j \ge 1}$ are independent and identically distributed.
 \end{remark}

\noindent For $i \ge 1$, we now define ``bad" events $B_{i}$ to be the union of the following events:

(I) $\{L_{i} > L_{i-1} + i \log(t_0)\}$.

(II) $\{\exists k\in\{ L_{i - 1} + 1, \dots, L_{i}\}$ so that $D_{k} \cap \{ k \} \times [ t_{0} 2^{i} - (t_{0} 2^{i} )^{\gamma}, t_{0} 2^{i}] \not= \emptyset\}.$

(III) $\{\exists k\in\{ L_{i - 1}, \dots, L_{i}-1\}$ so that there are no marks of $N_{k,k+1}$ in the time interval $( t_{0} 2^{i} - (t_{0} 2^{i} )^{\gamma}+ \frac{k-L_{i-1}}{i\log t_0} (t_{0} 2^{i})^{\gamma}, t_{0} 2^{i}- (t_{0} 2^{i})^{\gamma} + \frac{k+1-L_{i-1}}{i\log t_0} (t_{0} 2^{i})^{\gamma})\}$.

\vspace{0.3cm}

\begin{lemma}\label{lem 4}
We can fix $t_0$ large enough so that $P(B_{i}) \leq K \exp (-ci) \hspace{0.3cm} \forall \hspace{0.2cm} i$, for some $c > 0, K < \infty$.
Furthermore, if $t_0$ is taken large enough, we will have $\sum_{i=1} ^ \infty P(B_i ) < 1/2$.
\end{lemma}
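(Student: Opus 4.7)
The plan is to decompose $B_i$ into its three component events (I), (II), (III) and bound each separately by a quantity exponentially small in $i$. The key structural fact (Remark \ref{markov}) is that, conditional on $\mathcal{F}_{L_{i-1}}$, the renewal processes at sites $L_{i-1}+j$, $j \ge 1$, are i.i.d.; moreover the Poisson processes $N_{k,k+1}$ are independent of all renewal variables. This permits free use of independence once one conditions on $L_{i-1}$.

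For (I), I would apply Lemma \ref{lem1} with $K = 4$ and $t = t_0 2^i$: each successive site $k > L_{i-1}$ satisfies $D_k \cap [t_0 2^i, t_0 2^{i+2}] = \emptyset$ with probability at least some $\delta_0 > 0$, independently of prior sites. Hence $L_i - L_{i-1}$ is stochastically dominated by a geometric$(\delta_0)$ random variable, giving $P(\text{event (I)}) \leq (1-\delta_0)^{i \log t_0}$. For (III), the Poisson processes of rate $\lambda$ are mutually independent and independent of the renewals, so a union bound over the at most $i \log t_0$ sites (on the complement of (I)) yields $P(\text{event (III)}) \leq P(\text{event (I)}) + i \log t_0 \cdot \exp\bigl(-\lambda (t_0 2^i)^\gamma/(i \log t_0)\bigr)$. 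For (II), the same union bound produces $P(\text{event (II)}) \leq P(\text{event (I)}) + i \log t_0 \cdot P\bigl(\mathcal{R} \cap [t_0 2^i - (t_0 2^i)^\gamma, t_0 2^i] \neq \emptyset\bigr)$, and the inner probability is to be controlled via Proposition \ref{lem5}.

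The main obstacle is this last application: since $\gamma = \epsilon_4$, the target interval of length $(t_0 2^i)^\gamma$ starting near $t_0 2^i$ just barely fails the hypothesis $2^n \leq s \leq t \leq s + 2^{n \epsilon_4}$ of Proposition \ref{lem5} for any single $n$. The natural fix is to take $n = \lfloor \log_2(t_0 2^i) \rfloor - 1$, so that $2^n \leq t_0 2^i - (t_0 2^i)^\gamma$ for $i$ large, and to cover the target interval by a bounded number (at most four) of sub-intervals each of length $\leq 2^{n\epsilon_4}$; Proposition \ref{lem5} applied to each, together with a union bound, then yields $P(\mathcal{R} \cap [t_0 2^i - (t_0 2^i)^\gamma, t_0 2^i] \neq \emptyset) \leq C (t_0 2^i)^{-\gamma}$. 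Combining the three estimates gives $P(B_i) \leq K \exp(-c i)$ for constants $K, c$ depending on $t_0$ (and $\lambda$ for (III)). For the summability claim, each of the three resulting geometric-type series has sum that vanishes as $t_0 \to \infty$: the first because $(1-\delta_0)^{\log t_0} \to 0$, the second because of the prefactor $t_0^{-\gamma}$, and the third because $\lambda t_0^\gamma \to \infty$; hence $\sum_i P(B_i) < 1/2$ by choosing $t_0$ sufficiently large.
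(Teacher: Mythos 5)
Your proposal is correct and follows the paper's own route essentially line by line: condition on $\mathcal{F}_{L_{i-1}}$ as in Remark~\ref{markov}, dominate $L_i - L_{i-1}$ by a geometric variable via Lemma~\ref{lem1} (with $K=4$) to control (I), union bound over the at most $i\log t_0$ sites together with Proposition~\ref{lem5} to control (II), and the Poisson tail to control (III); each term is geometric in $i$ with a prefactor that vanishes as $t_0 \to \infty$, giving both the exponential bound and the summability.

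The one point where you go beyond the paper is the application of Proposition~\ref{lem5} to event (II), and your concern is legitimate: with $\gamma = \epsilon_4$ the interval $[t_0 2^i - (t_0 2^i)^\gamma, t_0 2^i]$ has length exactly $(t_0 2^i)^{\epsilon_4}$, and there is no single $n$ with both $2^n \leq t_0 2^i - (t_0 2^i)^\gamma$ and $2^{n\epsilon_4} \geq (t_0 2^i)^{\epsilon_4}$. The paper applies the proposition tacitly, without addressing this borderline. Your covering fix is valid: take $n = \lfloor \log_2(t_0 2^i) \rfloor - 1$, note that Proposition~\ref{lem5} imposes only the lower bound $2^n \leq s$ (there is no upper bound on $s$), so each of a bounded number of subintervals of length at most $2^{n\epsilon_4}$ satisfies the hypotheses, and the union bound gives $C(t_0 2^i)^{-\epsilon_4}$ for a universal $C$, which is absorbed harmlessly into the constant $K$. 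An equally natural alternative would be to fix $\gamma$ strictly less than $\epsilon_4$ at the outset, which removes the borderline entirely while leaving the rest of the argument in Section~\ref{proof-thm} unchanged. Either way, the resulting bound is the paper's $i \log t_0 \cdot (t_0 2^i)^{-\gamma}$ up to a constant, and the proof goes through.
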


\vspace{0.3cm}
\begin{proof}
\noindent By Lemma \ref{lem1} and Remark \ref{markov}, the events
$$V_{j} = D_{L_{i-1} + j} \cap \{ L_{i-1} + j  \} \times [t_{0} 2^{i}, t_{0} 2^{i + 2}] = \emptyset$$
\noindent are independent and independent of $\mathcal{F}_{L_{i-1}}$ having probability $c_{1} > 0$, provided $t_{0}$ has been fixed sufficiently large. Hence
$$P(L_{i} > L_{i-1}+ i\log t_0) \leq (1 - c_{1})^{i\log(t_0)}$$

\vspace{0.1cm}

\noindent By Proposition \ref{lem5} the probability of (II) occurring and $L_{i} \leq L_{i-1}+ i\log t_0$ is bounded by $i\log(t_0) (t_{0} 2^{i})^{- \gamma}$ (again supposing $t_{0}$ is large). Similarly the intersection of (III) and $L_{i} \leq L_{i-1}+ i\log t_0$ has a probability bounded by
$$i \log(t_0)e^{- \lambda(t_{0} 2^{i})^{\gamma} /i\log(t_{0})}$$
\end{proof}

\noindent {\it Proof of Theorem \ref{thm1}} \\
Let $ \lambda > 0 $ be any strictly positive value.
We choose $t_0 $ so large that Lemma \ref{lem 4} holds and in particular that $\sum_{i=1} ^ \infty P(B_i ) < 1/2$.  Then we simply observe that on the intersection of $\{ T_{0,1} > 4t_0\}$ and $\cap _{i \geq 1}B_i^ c$,
the RCP starting with a single infected site at $0$ survives forever.  Thus the survival probability is strictly positive.  Given that $\lambda $ can be as small as desired the result is proven. \qed

\bigskip

\noindent {\textbf {Acknowledgements:}} \\L.R.G. Fontes acknowledges support of CNPq (grant 311257/2014-3)
and Fapesp (grant 2017/10555-0). M.E. Vares acknowledges support of CNPq (grant 305075/2016-0) and FAPERJ (grant E-26/203.048/2016).

\end{document}